\newtheorem{thm}{Theorem}
\newtheorem{lem}[thm]{Lemma}
\theoremstyle{remark}
\DeclareMathOperator{\Aut}{Aut}
\title{Breaking small automorphisms by list colourings}
\author{Jakub Kwaśny, Marcin Stawiski\footnote{ Corresponding author; stawiski@agh.edu.pl}
}
\affil{AGH University,\\ Faculty of Applied Mathematics, \protect\\al. Mickiewicza 30, 30-059 Krakow, Poland}
\begin{document}
\maketitle
\begin{abstract}
For a graph $G$, we define a small automorphism as one that maps some vertex into its neighbour. We investigate the edge colourings of $G$ that break every small automorphism of $G$. We show that such a colouring can be chosen from any set of lists of length three. In addition, we show that any set of lists of length two on both edges and vertices of $G$ yields a total colouring which breaks all the small automorphisms of $G$. These results are sharp and they match the non-list variants. 

\bigskip\noindent \textbf{Keywords}: distinguishing index, symmetry breaking, infinite graphs, colourings

\noindent {\bf \small Mathematics Subject Classifications}: 05C15, 05C78
\end{abstract}

\section{Introduction}

The concept of \emph{distinguishing} vertex colourings was introduced in 1977 by Babai \cite{BAB}, as the colourings which are preserved only by the identity automorphism of the graph. The natural optimization problem is to minimize the number of colours in such a colouring, and this minimum number for a given graph $G$ is called the \emph{distinguishing number} of $G$, and denoted by $D(G)$.

We study the following problem proposed by Kalinowski, Pil\'sniak and Wo{\'z}niak \cite{small}. An automorphism $\varphi$ of a graph $G$ is \emph{small} if, for some vertex $v\in V(G)$, $\varphi(v)$ is a neighbour of $v$. We are interested in the minimum number of colours needed to break every small automorphism of $G$, which is called the \emph{small distinguishing index} of $G$ and denoted by $D'_s(G)$. This problem is strongly connected with the concept of general distinguishing of adjacent vertices by edge colourings, where the distinguishing condition is generally stronger than just breaking the automorphisms. In particular, we may demand that the incident vertices have different sums (or sets, or multisets, etc.) of the colours (which are then restricted to the positive integers) on the incident edges. One of the central problems in this field was 1-2-3 Conjecture posed by Karo\'nski, \L{}uczak and Thomason \cite{KLT}, which states that the set of colours $\{1,2,3\}$ is sufficient for any finite graph without $K_2$ as a component to admit an edge colouring so that the sums of colours on the incident edges are different for any two neighbouring vertices. This conjecture was recently confirmed by Keusch \cite{Keusch}, and it was then generalized to locally finite graphs by Stawiski \cite{StawiskixD}. 

Kalinowski, Pil\'sniak and Wo{\'z}niak \cite{small} proved that $D'_s(G) \leq 3$ for any finite graph $G$ without $K_2$ as a component. They also showed that only two colours are sufficient for total colourings. We generalize both these results into the list colouring setting. Moreover, our result holds for both finite and infinite graphs.

%\begin{thm}
%\label{thm:regular_list}
%If $G$ is a connected, locally finite regular graph, then $D'_l(G)\leq 2$, unless $G$ is either $K_n$ for $n \leq 5$, or $K_{n,n}$ for $n \leq 3$, or $C_5$.
%\end{thm}

%We shall also use the property that each of the exceptional graphs in Theorem \ref{thm:regular_list} has an almost-distinguishing colouring from the lists of length two. 

We follow the notation in \cite{small} and extend it to the list colourings. In particular, we denote by $D'_{l,s}(G)$ the \emph{small list distinguishing index} of $G$, i.e. the least size of the lists assigned to the edges of $G$ such that there exists an edge colouring from any set of lists of this size which breaks every small automorphism of $G$. We prove that $D'_{l,s}(G)$ is at most three for any finite or infinite locally finite graph without components isomorphic to $K_2$, and we prove that two colours suffices for the analogous problem for total colourings. This is a support for the list version of 1-2-3 Conjecture, and the total list version of 1-2-3 Conjecture, which still remain open.
Note that both results are sharp because $D'_{l,s}(G)=3$ for $G\in \{K_3,K_4,K_5,C_4,C_5\}$. 
%Further note that the results obtained by Kalinowski, Pil\'sniak and Wo{\'z}niak \cite{small} were proved only for finite graphs.

%%%%%%%%%%%%%%%%%%%%%%%%%%%%%%%%%%%%%%%%%%%%%%%%%%%%%%%%%
%%%%%%%%%%%%%%%%%%%%%%%%%%%%%%%%%%%%%%%%%%%%%%%%%%%%%%%%%
\section{Edge colourings}

Let $G$ be an arbitrary graph and $r$ be a vertex of $G$. We say that $(G,r)$ is a \emph{rooted} graph, and we refer to $r$ as the \emph{root} of $G$. The set of automorphisms of $(G,r)$, denoted by $\Aut(G,r)$, is the set of these automorphisms of $G$ which fix $r$. We denote the set of all the automorphisms of $G$ by $\Aut(G)$.

The proof of the main theorems relies on the following lemma, which asserts that the lists of length $2$ are almost sufficient to break all the small automorphisms of any locally finite graph.

%We call a colouring \emph{small almost-distinguishing} if there is a root vertex $r$ such that the colouring breaks all these small automorphisms that fix $r$. 

\begin{lem} \label{lem:almost}
%Let $G$ be a connected locally finite graph other than $K_2$. Then $G$ admits a small almost-distinguishing edge colouring from the list of length two. Moreover, we can decide that some particular vertex of maximum degree will be the root vertex. 
Let $G$ be a connected locally finite graph other than $K_2$, and let $r$ be an arbitrary vertex of $G$. Then $G$ admits a colouring from the list of length $2$, which breaks all the small automorphisms of $(G,r)$. 
\end{lem}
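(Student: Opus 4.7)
The plan is to build the colouring phase by phase along the natural BFS decomposition of $G$ from $r$, maintaining at each phase an inductive invariant that rules out small automorphisms at ever deeper layers. Let $L_i = \{v \in V(G) : d(v,r) = i\}$; by local finiteness and connectedness, each $L_i$ is finite. Partition $E(G)$ into \emph{vertical} edges $E_i$ between $L_{i-1}$ and $L_i$ and \emph{horizontal} edges $H_i$ inside $L_i$. Every $\varphi \in \Aut(G,r)$ preserves distance to $r$, hence preserves each $L_i$; consequently, $\varphi$ is small if and only if there exist some $i$ and some horizontal edge $uv \in H_i$ with $\varphi(u) = v$. The goal therefore reduces to colouring $E(G)$ from the given lists so that no colour-preserving automorphism in $\Aut(G,r)$ maps any horizontal edge endpoint to the other.

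\medskip

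Phase $i$ colours the edges $E_i \cup H_i$. Write $A_i \le \Aut(G,r)$ for the group of automorphisms preserving the colouring through phase $i$, with $A_0 = \Aut(G,r)$. I aim to maintain the invariant $(\mathrm{I}_i)$: for every $\varphi \in A_i$ and every horizontal edge $uv \in H_j$ with $j \le i$, $\varphi(u) \ne v$. Given $(\mathrm{I}_{i-1})$, call a triple $(u,v,\varphi)$ \emph{bad} when $uv \in H_i$ and $\varphi \in A_{i-1}$ sends $u$ to $v$. Since $u \ne v$ and each has a non-empty parent set in $L_{i-1}$, and $\varphi$ sends the parent edges of $u$ bijectively onto the parent edges of $v$ which are disjoint subsets of $E_i$, every bad $\varphi$ acts non-trivially on $E_i$. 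Breaking $\varphi$ at phase $i$ thus amounts to choosing some edge $e \in E_i \cup H_i$ with $c(e) \ne c(\varphi(e))$.

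\medskip

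The main obstacle is to show that all the break conditions arising at phase $i$ can be satisfied simultaneously from size-$2$ lists: each edge admits only a binary choice, while many $\varphi$'s may need to be killed at once. My proposed route is to group the bad triples by their $A_{i-1}$-orbits — finitely many, because $L_i$ is finite — and, within each orbit, identify a family of "flippable" edges on which the involved $\varphi$'s act non-trivially; simultaneous satisfiability of the per-orbit disagreement constraints would then follow from a Hall-style matching between "edges I can flip" and "automorphisms I must break", exploiting that each bad $\varphi$ contributes at least one non-trivial edge orbit in $E_i$. Establishing this combinatorial step rigorously, and in particular checking that the size-$2$ lists leave enough freedom after accounting for interactions between distinct bad triples sharing edges, is the crux of the argument.

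\medskip

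For infinite $G$ the construction is run through $\omega$ phases and colours every edge exactly once, so the union of the phase colourings is a well-defined colouring of $E(G)$; any colour-preserving $\varphi \in \Aut(G,r)$ lies in $\bigcap_i A_i$ and, by $(\mathrm{I}_i)$, cannot be small at any level, hence is not small at all. The excluded case $G = K_2$ is essentially vacuous, since then $\Aut(K_2,r)$ is trivial and there are no small automorphisms of $(K_2,r)$ to break.
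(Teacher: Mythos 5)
Your reduction is sound as far as it goes: automorphisms in $\Aut(G,r)$ preserve distance to $r$, so a small one must send some vertex to a neighbour in the same BFS layer, and you correctly observe that such an automorphism necessarily moves a vertical edge. But the proof stops exactly where the lemma's real content begins. The step you yourself flag as "the crux" --- simultaneously satisfying, from size-$2$ lists, the disagreement constraints for all bad automorphisms at a given layer --- is not established, and the Hall-style matching you gesture at is unlikely to work as stated: the number of (restrictions of) bad automorphisms on a layer can be exponential in the number of available edges, each edge contributes only one bit of freedom, and distinct bad triples share edges, so there is no injective assignment of "edges to flip" to "automorphisms to break". Without a concrete mechanism here, the invariant $(\mathrm{I}_i)$ cannot be propagated.

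The paper closes this gap with a different idea that your layer decomposition does not capture: induction on the maximum degree. It refines the layers into orbits of $\Aut(G,r)$ and notes that every vertex of an orbit has at least one back edge, so the subgraph induced on an orbit has maximum degree strictly less than $\Delta(G)$; the induction hypothesis then yields a $2$-list colouring of each component $H$ of that induced subgraph breaking all small automorphisms of $(H,r_H)$, and the back edges are used only for the much easier task of pinning down the root $r_H$ (one distinguished back edge coloured blue, the rest of that component's back edges non-blue). The base case $\Delta \le 2$ (paths, cycles, rays, double rays) is checked directly. If you want to salvage your approach, you need either this degree induction or some other genuine argument for the horizontal edges within a layer; as written, the proposal reformulates the problem but does not solve it.
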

\begin{proof}

First, consider the case that the graph $G$ has bounded degree. This part of the proof is by induction on $\Delta(G)$. If $\Delta(G)\leq 2$, then $G$ is the  ray, the double ray, a path, or a cycle, and it is easy to verify that the claim holds. Assume then that $\Delta(G)\geq 3$. 

Choose a vertex $r\in V(G)$. We shall construct a colouring of the edges of $G$ that breaks all small automorphisms in $\Aut(G,r)$. Let $\mathcal{A}$ be the set of orbits with respect to $\Aut(G,r)$. We fix some ordering $\mathcal{A} = \{A_0, A_1, \dots\}$ such that if $i<j$, then the vertices of $A_i$ have less or equal distance from $r$ than the vertices of $A_j$ (note that this distance is constant within each orbit). In particular, $A_0=\{r\}$. We shall process these orbits one by one, starting with $A_1$, and in each step $i$ we shall choose colours for all the edges between the vertices of $A_i$ and the vertices of $\bigcup_{j\leq i} A_j$. 

Let $A_i$ be the currently processed orbit. The vertices of $A_i$ have at least one back edge, i.e. an edge to already coloured orbits. Therefore, $\Delta(G[A_i])< \Delta(G)$ and we can colour the edges of each component $H$ of $G[A_i]$ using the induction hypothesis, and break all the small automorphisms of $(H, r_H)$ for some $r_H\in V(H)$. We do not require these components to have non-isomorphic colourings, because we just need to break the small automorphisms in each component. Then, in each component $H$, we must fix the vertex $r_H$ to obtain a colouring that breaks all the small automorphisms of $H$. We achieve it by colouring some edge incident to $r_H$, which lies on a shortest path from this vertex to $r$ (the other end-vertex of that edge is in a previous orbit) using one of the two colours from its list, say blue, and then colouring all the other edges from the same component to the previous orbits with arbitrary colours other than blue. 

We now argue that after repeating these steps for each orbit $A_i$, $i\ge 1$, we break all the small automorphisms of $(G,r)$. Let $\varphi$ be a small automorphism of $G$ that fixes $r$. Then, there is a vertex $x$ such that $\varphi(x)\in N(x)$. These two vertices $x$ and $\varphi(x)$ are contained in some orbit $A_i$ with respect to $\Aut(G,r)$, and since they are neighbours, they must lie in the same component $H$ of this orbit. As our colouring restricted to $H$ breaks all the small automorphisms of $(H,r_H)$, and $r_H$ is the only vertex in $H$ with a blue edge to a previous orbit, then $\varphi$ must change a colour of either this blue edge or one of the edges inside $H$. This means that $\varphi$ cannot be preserved by the colouring.

Finally, if $G$ has vertices of arbitrarily large degrees, then we perform the same procedure as above, and at the point when we used the induction hypothesis, we just use the claim for a finite maximum degree as all the considered orbits are finite.
\end{proof}

\begin{thm}
    %Let $G$ be a connected locally finite graph on at least three vertices, and $\mathcal{L}$ be the set of lists for edges of $G$ of length $3$. Then there exists an edge colouring $c$ of $G$ from the set of lists $\mathcal{L}$ which breaks every small automorphism of $G$. Moreover, there is a colour in the sum of all lists, which is used on only one edge.
Let $G$ be a 
%connected 
locally finite graph without a $K_2$ component. Then $D'_{l,s}(G) \leq 3$. 

\end{thm}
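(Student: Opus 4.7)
The plan is to use the extra colour available from lists of size $3$ to uniquely mark one edge $e_0$; this forces every colour-preserving automorphism to fix $e_0$ setwise, after which Lemma~\ref{lem:almost} handles almost every small automorphism. The only remaining case is a potential endpoint-swap of $e_0$, which requires extra care.

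First, I would reduce to the case that $G$ is connected. If $G$ is disconnected, any automorphism permuting two distinct components maps every vertex to a vertex in another component, which cannot be a neighbour, so no such automorphism is small. Consequently every small automorphism preserves components setwise, and it suffices to colour each non-trivial component independently; $K_1$ components carry no edges, and no component is $K_2$ by hypothesis, so Lemma~\ref{lem:almost} applies to every non-trivial component.

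Now assume $G$ is connected and not $K_2$. I pick an edge $e_0 = uv$ such that $G - e_0$ contains no $K_2$ component (take $e_0$ to be a pendant edge if $G$ has a leaf; otherwise $\delta(G) \ge 2$ and any $K_2$ component of $G - e_0$ would contain a vertex of degree strictly less than $2$ in $G$, a contradiction). I fix a colour $c_0 \in L(e_0)$ and set $c(e_0) := c_0$; for every other edge $e$, I choose a $2$-element sublist $L'(e) \subseteq L(e) \setminus \{c_0\}$, which exists because $|L(e)| = 3$. Applying Lemma~\ref{lem:almost} componentwise to $G - e_0$ with the lists $L'$ --- rooting at $u$ in the component containing $u$, and, if $e_0$ is a bridge, at $v$ in the other component --- gives a colouring of $E(G - e_0)$, and together with $c(e_0) = c_0$ this is a colouring $c$ of $G$ from $L$. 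Suppose $\varphi$ is a small automorphism of $G$ preserving $c$. Since $c_0$ is used only on $e_0$, $\varphi$ fixes $\{u,v\}$ setwise, so either $\varphi$ fixes $u$ and $v$ pointwise or it swaps them. In the first subcase, the edge witnessing the smallness of $\varphi$ cannot be $e_0$ (both of its endpoints are fixed), so it lies in a single component of $G - e_0$; the restriction of $\varphi$ to that component is then small and fixes its chosen root, contradicting Lemma~\ref{lem:almost}.

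The main obstacle is the remaining swap case, $\varphi(u) = v$ and $\varphi(v) = u$, which is itself small via the edge $e_0$. To exclude it, the plan is to choose $e_0$ so that no automorphism of $G$ swaps its endpoints: whenever $G$ has an edge between vertices of distinct degrees, any such edge works. In the remaining, highly symmetric case in which every edge admits an endpoint-swap (so $G$ must be regular), I would exploit the flexibility left in the choice of the $2$-sublists $L'$ and in the inductive construction of Lemma~\ref{lem:almost} to ensure that the resulting colouring of $G - e_0$ is not invariant under any swap of $u$ and $v$. For instance, when $G = C_n$ so that $G - e_0 = P_n$, the lemma imposes no further constraint beyond being a colouring from $L'$, and one can readily pick a non-palindromic colouring to defeat the reflection. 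Formalising this last step uniformly across all finite and locally finite regular graphs will be the most delicate point of the proof.
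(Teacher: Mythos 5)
Your overall strategy --- reserve one of the three list colours to mark a single edge $e_0=uv$ uniquely, so that any colour-preserving small automorphism must stabilise $\{u,v\}$, and then invoke Lemma~\ref{lem:almost} on $G-e_0$ --- is a genuinely different route from the paper, which instead reserves a colour (``pink'') to anchor a \emph{vertex} $r$ of degree at least $3$ and then performs a local recolouring of one or two edges at $r$ to force every preserved automorphism to fix $r$. The component reduction and the subcase where $\varphi$ fixes $u$ and $v$ pointwise are handled correctly in your write-up. However, the endpoint-swap case is not a loose end to be formalised later: it is precisely the crux of the theorem, and your proposal does not contain an argument for it. Lemma~\ref{lem:almost} is of no help as a black box here, for two reasons. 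First, an automorphism swapping $u$ and $v$ does not restrict to an automorphism of a rooted component of $G-e_0$ fixing its root (it either moves the root of the single component, or interchanges the two components when $e_0$ is a bridge), so the lemma says nothing about it. Second, the proof of the lemma explicitly declines to make the colourings of distinct components non-isomorphic, so in a highly symmetric instance such as a regular tree the two halves of $G-e_0$ may well receive colourings interchanged by a swap, and there may be infinitely many swapping automorphisms to defeat simultaneously. Making ``the flexibility in the inductive construction'' precise would amount to strengthening Lemma~\ref{lem:almost} to a statement about breaking colour-preserving isomorphisms between two rooted graphs, which is a substantial new ingredient rather than a routine verification. This is exactly the difficulty the paper's vertex-anchoring avoids: by fixing a vertex rather than an edge, no swap case arises, and the residual work is a finite local case analysis around $r$.

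Two smaller points. Your two requirements on $e_0$ (that $G-e_0$ have no $K_2$ component, and that $u$, $v$ have distinct degrees) are introduced separately but must hold simultaneously, and this is not checked. Also, the claim that a pendant edge always works fails for $P_3$: deleting its pendant edge leaves a $K_2$ component. These cases are easily absorbed into a $\Delta\leq 2$ base case, but as written the proof does not do so. The swap case, by contrast, is a genuine gap that the argument cannot be completed without.
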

\begin{proof}

Let $G=(V,E)$ be a locally finite graph with no $K_2$ components, and let $\Delta(G)$ be its maximum degree. Note that it is sufficient to break only the automorphisms of each connected component of $G$, since any automorphism that maps one component into another is a composition of a small automorphism that stabilizes all the components or the identity, and a non-small automorphism. Therefore, we shall colour each component of $G$ separately. 
Let $H$ be a component of $G$. 

If $\Delta(H)\leq 2$, then again the proof is straightforward. We shall assume that $\Delta(H)\geq 3$.

Choose a vertex $r\in V(H)$ of degree at least 3. Take any edge incident to $r$ and name any colour from its list pink (without colouring the edge, at this point we just choose a name for the colour). Remove the colour pink from all the lists and use Lemma \ref{lem:almost} to obtain a colouring $c$ of the subgraph $H$ from the modified lists which breaks all the small automorphisms of $(H,r)$. We shall now make a slight correction of this colouring to fix $r$ and keep the distinction of the rest of the subgraph.

If $r$ is fixed by now, then $c$ is already the desired colouring. Otherwise, we shall recolour one of the edges incident to $r$ with pink. We consider all such recolourings, for all the neighbours of $r$. If there is a neighbour $x$ of $r$ such that recolouring the edge $rx$ with pink yields a colouring that fixes $r$ with respect to $\Aut(H)$, then we implement this change and return the resulting colouring. Assume now that no such neighbour exists. If all the edges incident to $r$ are of the same colour, say blue, then we recolour an arbitrary edge $rx$ with pink and some other edge $ry$ with a colour other than pink and blue, say red. Note that $r$ and $x$ became the only two vertices with a pink incident edge, and $r$, unlike $x$, has an incident red edge, so they are distinguished. If, otherwise, there are at least two different colours on the edges incident to $r$, say $rx$ and $ry$, then we recolour $ry$ with the colour of $rx$ (say red) and $rx$ with pink. Here we must be a bit more careful, and if possible, choose $x$ and $y$ from the same component of the same orbit, so that $x$ is the root of that component. 
This way, we fix $r$ for the exact same reason as in the previous case. 

Let now $\varphi$ be a small automorphism of $H$ which is preserved by the resulting colouring. By the arguments above, $\varphi$ must fix $r$. Since $c$ broke all the small automorphisms of $(H,r)$, there must be an edge $e$ such that $c(e) \neq c(\varphi(e))$. Then, $e$ or $\varphi(e)$ must have changed its colour during the correction, so $e=rx$ and $\varphi(e)=ry$ for some $x,y$. The new colour for $e$ or $\varphi(e)$ cannot be pink, as there is only one pink edge in the resulting colouring. Hence, the new colour must be red, and it is the case that there were at least two different colours on the edges incident to $r$ in the colouring $c$. Then, either $x$ and $y$ are in the same component of the same orbit (and, since $x$ is its root, there would have to be a second edge inside this component, mapped by $\varphi$ into an edge of a different colour), or any component of any orbit contained in $N(r)$ has the same colour on the edges to $r$ (so these colours were not relevant for breaking $\varphi$ and some other edge not incident to $r$ changes colour after applying $\varphi$). In both cases, $\varphi$ cannot be preserved by our colouring.
\end{proof}

\section{Total colourings}

\begin{thm}
%Let $G$ be a connected locally finite graph, and  $\mathcal{L}$ be the set of lists for edges of $G$ of length $2$. Let $r$ be an arbitrary vertex of $G$  Then there exists an a total colouring $c$ of $G$ from the set of lists $\mathcal{L}$ of length $2$ for edges, from the set of lists $\mathcal{L}_V$ for edges which breaks every small automorphism of $G$ which fix $r$, and $r$ is the unique vertex with colour $c(r)$. 
Let $G$ be a locally finite graph without a $K_2$ component and $\mathcal{L}=\{L_x\}_{x\in V\cup E}$ be the set of lists of length two for the vertices and edges of $G$. Then, $G$ admits a total colouring from lists, which breaks all the small automorphisms of $G$. 
\end{thm}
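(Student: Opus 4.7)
The plan is to handle each connected component separately, and within each component to combine Lemma~\ref{lem:almost} with a vertex colouring that uses the spare colour on each list to single out a root. For the component reduction I would repeat the argument from the proof of the previous theorem: if $\varphi$ is a small automorphism of $G$ witnessed by a vertex $x$ with $\varphi(x)\in N(x)$, then $x$ and $\varphi(x)$ lie in the same component $H_0$, so $\varphi$ restricts to a small automorphism of $H_0$; hence breaking all small automorphisms of every component suffices.

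Fix a component $H$ and an arbitrary vertex $r\in V(H)$. I would apply Lemma~\ref{lem:almost} to the edge lists of $H$ with root $r$ to get an edge colouring $c_E$ from these lists that breaks every small automorphism of $(H,r)$. To extend $c_E$ to a total colouring, I would choose $c_V(r)\in L_r$ arbitrarily and, for every $v\in V(H)\setminus\{r\}$, pick $c_V(v)\in L_v\setminus\{c_V(r)\}$; this is possible because $|L_v|=2$. Then $r$ is the unique vertex of colour $c_V(r)$, so any colour-preserving automorphism of $H$ must fix $r$, and any such automorphism that is additionally small lies in $\Aut(H,r)$ and is therefore broken by $c_E$.

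I do not expect a serious obstacle. The total version is in fact lighter than the pure edge version: the vertex palette supplies a cheap way to anchor the root and replaces the delicate pink-recolouring trick used in the previous theorem. The only point I would double-check is that Lemma~\ref{lem:almost} covers the small-degree cases ($\Delta(H)\le 2$: paths, cycles, rays, and double rays) without a separate base analysis, but since the lemma is stated for every connected locally finite graph other than $K_2$ and permits an arbitrary root, this is automatic.
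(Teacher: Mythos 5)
Your proposal is correct and follows the paper's proof exactly: decompose into components, apply Lemma~\ref{lem:almost} with an arbitrary root $r$, and use the length-two vertex lists to give $r$ a unique vertex colour so that every colour-preserving automorphism fixes $r$. The paper merely states that the vertices can be coloured so that $r$ gets a unique colour; your explicit choice of $c_V(v)\in L_v\setminus\{c_V(r)\}$ is the intended (and valid) way to do this.
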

\begin{proof}
We again consider each connected component $H$ of $G$ separately. Let $r$ be an arbitrary vertex in $H$. By Lemma \ref{lem:almost}, there is an edge colouring of $H$ from the lists $\{L_e\}_{e\in E}$, which breaks every small automorphism of $(H,r)$. Then, choose the colours for the vertices so that $r$ has a unique colour in the component.  
%We colour the edges of $G$ in the same way as in Theorem \ref{thm:total0} without using pink anywhere. Now, it is enough to fix a one vertex. We do that by choosing a unique colour for this vertex.
\end{proof}

%%%%%%%%%%%%%%%%%%%%%%%%%%%%%%%%%%%%%%%%%%%%%%%%%%%%%%%%%%%%%%%
\bibliographystyle{abbrv}
\bibliography{lit.bib}
%%%%%%%%%%%%%%%%%%%%%%%%%%%%%%%%%%%%%%%%%%%%%%%%%%%%%%%%%%%%%%%

\end{document}